\documentclass[reqno,11pt,twoside]{article}
\usepackage[hmargin=3cm, vmargin=1.in, marginparwidth=2.6cm, marginparsep=0.3cm, a4paper, centering]{geometry}
\usepackage[T1]{fontenc}
\usepackage[lining]{ebgaramond}
\usepackage{amsmath,amsthm}
\usepackage[ebgaramond]{newtxmath}
\ifdefined\iflatexml
\theoremstyle{plain}
\newtheorem{theorem}{Theorem}
\newtheorem{prop}[theorem]{Proposition}
\newtheorem{lemma}[theorem]{Lemma}

\theoremstyle{definition}

\newtheorem{remark}[theorem]{Remark}
\else
\usepackage{tcolorbox,varwidth}
\tcbuselibrary{skins, breakable, theorems}
\colorlet{thmcolor}{orange!80!white}
\colorlet{remcolor}{teal!50!white}
\colorlet{defncolor}{blue!50!black}
\usepackage{keytheorems}
\tcbset{commonstyle/.style={%
colback=white!95!tcbcolframe,
colbacktitle=white!80!tcbcolframe,
arc=2mm,
fonttitle=\bfseries,
coltitle=black,
enhanced,
varwidth boxed title*=-2cm,
attach boxed title to top left={yshift=-3mm,xshift=0.5cm,yshifttext=-1mm},
description font=\mdseries,
breakable,
before upper={\parindent15pt\noindent},
beforeafter skip balanced=10.0pt plus 1.0pt minus 1.0pt,
separator sign={\textbf{: }},
overlay first={\draw[color=tcbcolframe, line width=.5pt] (frame.south west)--(frame.south east);},
overlay middle={\draw[color=tcbcolframe, line width=.5pt] (frame.south west)--(frame.south east);\draw[color=tcbcolframe, line width=.5pt] (frame.north west)--(frame.north east);},
overlay last={\draw[color=tcbcolframe, line width=.5pt] (frame.north west)--(frame.north east);}
}
}
\newkeytheoremstyle{mythmstyle}{
bodyfont=\normalfont,
headpunct={},
notebraces={}{},
noteseparator={: }, 
notefont=\bfseries,
tcolorbox = {colframe=thmcolor,commonstyle}
}
\newkeytheoremstyle{myremstyle}{
bodyfont=\normalfont,
headpunct={},
notebraces={}{},
noteseparator={: }, 
notefont=\bfseries,
tcolorbox = {colframe=remcolor,commonstyle}
}
\newkeytheoremstyle{mydefstyle}{
bodyfont=\normalfont,
headpunct={},
notebraces={}{},
noteseparator={: }, 
notefont=\bfseries,
tcolorbox = {colframe=defncolor,commonstyle}
}
\newkeytheorem{theorem}[
name=Theorem,
style=mythmstyle
]
\newkeytheorem{lemma}[
name=Lemma,
style=mythmstyle,
sibling=theorem
]
\newkeytheorem{prop}[
name=Proposition,
style=mythmstyle,
sibling=theorem
]
\newkeytheorem{corollary}[
name=Corollary,
style=mythmstyle,
sibling=theorem
]
\newkeytheorem{remark}[
name=Remark,
sibling=theorem,
style=myremstyle
]
\newkeytheorem{definition}[
name=Definition,
style=mydefstyle,
sibling=theorem
]
\fi
\usepackage{mleftright} 
\mleftright 
\usepackage[nobottomtitles,pagestyles]{titlesec}
\titleformat{\section}
{\normalfont\large\bfseries}
{\filcenter\S\thesection.}{1ex}{\filcenter}
\titleformat{\subsection}
{\normalfont\bfseries}
{\filcenter\S\thesubsection.}{1ex}{\filcenter}
\usepackage[colorlinks,allcolors=blue,pagebackref]{hyperref}
\usepackage{pifont}
\renewcommand*{\backrefalt}[4]{%
\ifcase #1 %
\textcolor{red}{No citations}%
\or
\ding{43}~p.~#2%
\else
\ding{43}~pp.~#2%
\fi}
\usepackage{breakcites}
\newcommand{\mydoi}[1]{\href{https://doi.org/#1}{doi: #1}}
\newcommand{\myarXiv}[1]{\href{https://arxiv.org/abs/#1}{arXiv: #1}}
\newcommand{\Om}{\Omega}
\newcommand{\R}{\mathbb R}
\newcommand{\C}{\mathbb C}
\newcommand{\grad}{\nabla}
\newcommand{\ir}{\mathrm{i}}
\newcommand{\dr}{\mathrm{d}}
\renewcommand\footnotemark{}
\begin{document}
\title{%
Payne's conjecture for buckling eigenvalues of odd index%
\footnote{{\bf MSC(2020): }Primary 35P15. Secondary 35J40, 74K20}%
\footnote{{\bf Keywords: } Buckling eigenvalues, Dirichlet eigenvalues, Payne's conjecture, eigenvalue inequalities}%
}
\author{
Michael Levitin\hspace{-3ex}
\thanks{%
\textbf{M. L.: }Department of Mathematics and Statistics, University of Reading, 
Pepper Lane, Whiteknights, Reading RG6 6AX, UK;
\href{mailto:M.Levitin@reading.ac.uk}{\nolinkurl{M.Levitin@reading.ac.uk}}; \url{https://www.michaellevitin.net}; ORCID: 0000-0003-0020-3265%
}
\and 
Iosif Polterovich
\thanks{%
\textbf{I. P.: }D\'e\-par\-te\-ment de math\'ematiques et de statistique, Univer\-sit\'e de Mont\-r\'eal, 
CP 6128 succ Centre-Ville, Mont\-r\'eal QC  H3C 3J7, Canada;
\href{mailto:iosif.polterovich@umontreal.ca}{\nolinkurl{iosif.polterovich@umontreal.ca}}; \url{https://www.dms.umontreal.ca/\~iossif}; ORCID: 0009-0007-0052-6589%
}
}
\date{\small arXiv:2609.12235v2, 28 September 2026} 
\maketitle

\begin{abstract}
In 1955, L. E. Payne conjectured that for a planar clamped plate, each buckling eigenvalue is at least as large as the Dirichlet Laplacian eigenvalue of the same domain with index shifted by one. We prove this conjecture for every odd buckling index.
\end{abstract}

\section{Introduction and main results}

Let $\Om\subset\R^2$ be a bounded  domain. We consider the Dirichlet eigenvalue problem for the Laplacian
\begin{equation}\label{eq:dirichlet-pde}
-\Delta\varphi=\lambda\varphi\quad\text{in }\Om,
\qquad
\varphi=0\quad\text{on }\partial\Om,
\end{equation}
and the buckling eigenvalue problem for the clamped plate
\begin{equation}\label{eq:buckling-pde}
\Delta^2u+\Lambda\Delta u=0\quad\text{in }\Om,
\qquad
u=\partial_\nu u=0\quad\text{on }\partial\Om.
\end{equation}
Both problems are understood in the weak sense, with $\varphi\in H_0^1(\Om)$ and $u\in H_0^2(\Om)$; for a Lipschitz domain, $H_0^2(\Om)$ consists  of those $u\in H^2(\Om)$ for which the traces of $u$ and $\partial_\nu u$ vanish on $\partial\Om$. That is, $0\neq u\in H_0^2(\Om)$ is a buckling eigenfunction with eigenvalue $\Lambda$ if and only if
\begin{equation}\label{eq:buckling-weak}
\int_\Om\Delta u\,\Delta\overline v=\Lambda\int_\Om\grad u\cdot\grad\overline v
\qquad\text{for all }v\in H_0^2(\Om).
\end{equation}
We denote by
\[
0<\lambda_1\leq\lambda_2\leq\cdots
\qquad\text{and}\qquad
0<\Lambda_1\leq\Lambda_2\leq\cdots
\]
the Dirichlet and buckling eigenvalues, respectively, repeated according to multiplicity. They are given by the variational principles
\begin{equation}\label{eq:dirichlet-minmax}
\lambda_k
=
\min_{\substack{L\subset H_0^1(\Om)\\ \dim L=k}}\ \max_{0\neq \varphi\in L}
\frac{\displaystyle\int_\Om |\grad \varphi|^2}{\displaystyle\int_\Om |\varphi|^2}
\end{equation}
and
\begin{equation}\label{eq:buckling-minmax}
\Lambda_k
=
\min_{\substack{L\subset H_0^2(\Om)\\ \dim L=k}}\ \max_{0\neq u\in L}
\frac{\displaystyle\int_\Om |\Delta u|^2}{\displaystyle\int_\Om |\grad u|^2}.
\end{equation}

The right-hand sides of \eqref{eq:dirichlet-minmax} and \eqref{eq:buckling-minmax} make sense for an arbitrary bounded open set $\Om\subset\R^2$, and from now on we use them as the \emph{definitions} of $\lambda_k(\Om)$ and $\Lambda_k(\Om)$, with no assumption on $\partial\Om$. Indeed, for any bounded open set, 
the embeddings $H_0^1(\Om)\hookrightarrow L^2(\Om)$ and $H_0^2(\Om)\hookrightarrow H_0^1(\Om)$ are compact, 
hence both spectra are discrete, and the minimum in \eqref{eq:buckling-minmax} is attained on the span of the first $k$ buckling eigenfunctions, understood in the sense of \eqref{eq:buckling-weak}.

The following inequality between the two spectra is usually referred to as \emph{Payne's conjecture}  \cite{Payne1955}:
\begin{equation}\label{eq:payne-conj}
\lambda_{k+1}\leq\Lambda_k,
\qquad k\geq1.
\end{equation}
For $k=1$, that is, $\lambda_2\leq\Lambda_1$, it was proved by Payne in the same paper. His original argument contained a gap, later repaired by Friedlander \cite{Friedlander2004}. Friedlander also showed that the stronger inequality $\lambda_3\leq\Lambda_1$ fails in general for planar domains, while it does hold under an additional fourfold rotational symmetry. Thus, at the first level, \eqref{eq:payne-conj} is the natural sharp comparison.

For general $k$, the same-index inequality $\lambda_k<\Lambda_k$ for bounded planar domains with locally Lipschitz boundary was obtained by Kelliher \cite{Kelliher2010}, 
cf.\  Remark~\ref{rem:same-index}. 
Friedlander \cite{Friedlander2021} discussed a reformulation of \eqref{eq:payne-conj} in terms of the negative eigenvalues of a Neumann-to-Laplacian boundary operator, pointed out that the corresponding variational problem cannot be posed on the whole of $H^2(\Om)\cap H_0^1(\Om)$, and suggested that a proof of \eqref{eq:payne-conj} should use features specific to Euclidean space.
More recently, Buoso and Freitas \cite{BuosoFreitas2025} revisited Payne's inequality in a general polyharmonic framework.

Our main result establishes \eqref{eq:payne-conj} for every odd $k$.

\begin{theorem}\label{thm:main}
Let $\Om\subset\R^2$ be a bounded open set. Then
\begin{equation}\label{eq:main}
\lambda_{2m}(\Om)\leq\Lambda_{2m-1}(\Om),
\qquad m\geq1.
\end{equation}
Equivalently, Payne's conjecture holds for every odd buckling index.
\end{theorem}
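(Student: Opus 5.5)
The plan is to adapt Payne's original $k=1$ argument, in which the partial derivatives of buckling eigenfunctions are used as Dirichlet test functions. The new ingredient is a rotation-by-$90^\circ$ structure on the space of these test functions. That structure only gives $\lambda_k\le\Lambda_k$ by counting dimensions. The extra index is to come from the fact that a real linear map on an odd-dimensional space cannot square to a negative definite map.

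\textbf{Setup.} Fix an odd $k=2m-1$ and write $\Lambda=\Lambda_k(\Om)$. Let $L\subset H_0^2(\Om)$ be the real span of the first $k$ buckling eigenfunctions, taken real-valued. On $H_0^1(\Om)$ consider the form
\[
Q(\varphi)=\int_\Om|\grad\varphi|^2-\Lambda\int_\Om|\varphi|^2,
\]
and define
\[
V=\{\partial_x u+\partial_y v:\ u,v\in L\}\subset H_0^1(\Om).
\]

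\textbf{Step 1: dimension of $V$.} The map $(u,v)\mapsto\partial_x u+\partial_y v$ should be injective on $L\times L$, so that $\dim_\R V=2k$. If $\partial_xu+\partial_yv=0$, I would extend $u,v$ by zero to $\R^2$ and use the Fourier transform: $\xi_1\hat u+\xi_2\hat v=0$ with $\hat u,\hat v$ entire. This forces $(u,v)=(\partial_y w,-\partial_x w)$ for a compactly supported $w$, and I would then rule this out using $u,v\in L$, or else work with the image of the map modulo its kernel.

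\textbf{Step 2: the key identity.} Define $J$ on $V$ by $J(\partial_xu+\partial_yv)=-\partial_yu+\partial_xv$, so $J^2=-1$. Integration by parts gives $\int|\grad\partial_xu|^2+\int|\grad\partial_yu|^2=\int|\Delta u|^2$ and $\int|\partial_xu|^2+\int|\partial_yu|^2=\int|\grad u|^2$. The cross terms satisfy $B(\partial_xu,\partial_yv)=B(\partial_yu,\partial_xv)$ because derivatives commute, and so they cancel. Hence
\[
Q(\varphi)+Q(J\varphi)=q(u)+q(v)\le0,\qquad q(u)=\int_\Om|\Delta u|^2-\Lambda\int_\Om|\grad u|^2\le 0 \text{ on } L.
\]

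\textbf{Step 3: reduction to a linear-algebra contradiction.} Suppose, for contradiction, that $\lambda_{k+1}>\Lambda_k$. By \eqref{eq:dirichlet-minmax}, every subspace of $H_0^1(\Om)$ on which $Q\le0$ has dimension at most $k$. Split $Q=A+N$ on $V$, where $N=\tfrac12(Q+Q\circ J)\le0$ is $J$-invariant and $A=\tfrac12(Q-Q\circ J)$ is $J$-anti-invariant. The positive index of $Q$ is at most that of $A$, which is at most $k$. Combined with the hypothesis, $Q$ is therefore nondegenerate on $V$ with signature exactly $(k,k)$, and this saturation is very rigid. In particular:
\begin{itemize}
\item $N$ must vanish on a large part of $V$;
\item every maximal positive subspace $P$ satisfies $P\cap JP=0$, hence $V=P\oplus JP$;
\item equality $Q(\varphi)+Q(J\varphi)=0$ forces the corresponding $u,v$ to lie in the $\Lambda_k$-eigenspace.
\end{itemize}

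\textbf{Step 4 (the main obstacle): building the linear map.} Let $\pi$ be the $Q$-orthogonal projection onto $P$. I would study the real linear map $T=\pi\circ J|_P:P\to P$, or its analogue on the negative subspace, and aim to show $T^2$ is negative definite with respect to the inner product $Q|_P$. The first thing to try is to express $T^2+\mathrm{id}$ through $N$ and the off-diagonal block of $J$, and to use $N\le0$ together with the saturation from Step 3. Once $T^2$ is shown negative definite, $\det(T)^2=\det(T^2)$ has sign $(-1)^k=-1$ since $k$ is odd. This is impossible, so $\lambda_{k+1}\le\Lambda_k$ follows, i.e.\ \eqref{eq:main}.

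The sign bookkeeping in Step 4, including how the degenerate part of $N$ interacts with $T$, is where I expect the argument to be most delicate. If it fails, the fallback is a perturbation argument: a small $J$-compatible perturbation would make the inequality strict, and the conclusion would then follow by continuity.
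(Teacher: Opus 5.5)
There is a genuine gap, and it is in Step 4, which carries the whole argument. Steps 3--4 never leave $V$. The contradiction is supposed to come from the signature of $Q|_V$, the structure $J$ and $N\le0$ alone. Neither the $Q$-negativity of $L$ itself nor the cross terms between $L$ and $V$ are ever used, and data on $V$ cannot suffice. Already $k=1$ shows this:
\begin{itemize}
\item Here $V=\mathrm{span}\{\partial_xu_1,\partial_yu_1\}$, and $N\equiv0$ because $q(u_1)=0$, so $Q|_V$ is $J$-anti-invariant.
\item Take $e=\cos\theta\,\partial_xu_1+\sin\theta\,\partial_yu_1$ for a suitable $\theta$, which is just a rotation of coordinates. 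Then $Q(e)=-Q(Je)=\alpha$ and $Q(e,Je)=0$.
\item If $\alpha\neq0$, then $V$ has no two-dimensional $Q$-nonpositive subspace. Nothing in your argument forces $\alpha=0$; it vanishes for the disc by symmetry, but that is special. So the only consequence of $\lambda_2>\Lambda_1$ that you use is actually true in this situation.
\item All conclusions of Step 3 then hold. $P$ and $JP$ are the lines $\R e$ and $\R Je$ (in some order), they are $Q$-orthogonal, and $\pi\circ J|_P=0$.
\end{itemize}
In any case a $1\times1$ real matrix never has negative square. So \emph{$(\pi\circ J|_P)^2$ negative definite} does not follow from what you have assembled, and no contradiction is available inside $V$. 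The fallback perturbation argument is not specified either. Payne's own $k=1$ proof gets its second direction by pairing $u_1$ with some $\partial_ju_1$. These are $Q$-orthogonal because $\int u_1\partial_ju_1=\int\grad u_1\cdot\grad\partial_ju_1=0$; this uses $L$ together with $V$, not $V$ alone.

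The missing idea is to keep $L$ in the trial space and to control its cross terms with the derivative directions. The paper does this after complexifying, with $E_k=L\otimes\C$ and $T=\partial_1+\ir\partial_2$. For $t>\Lambda_k$, both $E_k$ and $TE_k$ are $q_t$-negative:
\begin{itemize}
\item For $E_k$ this follows from $\|\grad u\|_2^2\le\|\Delta u\|_2\|u\|_2\le\Lambda_k^{1/2}\|\grad u\|_2\|u\|_2$.
\item For $TE_k$ it follows from $q_t[Tw]=\|\Delta w\|_2^2-t\|\grad w\|_2^2$. Your Step 2 identity is exactly the real form of this: take $w=u-\ir v$, so that $\varphi=\operatorname{Re}Tw$ and $J\varphi=-\operatorname{Im}Tw$.
\end{itemize}
The cross terms are encoded by the complex bilinear form $\omega_t(u,w)=b_t(Tu,w)$ on $E_k$, which is skew-symmetric by integration by parts. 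For odd $k$ its matrix satisfies $\det M=(-1)^k\det M$, so $\omega_t$ is degenerate. A kernel vector $v$ then gives $Tv$ that is $q_t$-orthogonal to all of $E_k$, using conjugation invariance of $E_k$. Hence $E_k\oplus\C Tv$ is a $(k+1)$-dimensional $q_t$-negative subspace, and min--max gives $\lambda_{k+1}<t$. So parity enters through the degeneracy of a skew form on $E_k$, not through a real map on a positive subspace of $V$. Also, since $\|Tu\|_2=\|\grad u\|_2$ and only the single vector $Tv$ is added, your unresolved Step 1 injectivity question never arises.
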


The new point is a first-order way of producing the additional Dirichlet trial direction required by the shift in \eqref{eq:payne-conj}. Fix $k$, let $E_k$ be the span of the first $k$ buckling eigenfunctions, and let $t>\Lambda_k$. Then both $E_k$ and its image under
\[
T:=\partial_1+\ir \partial_2
\]
are negative subspaces with respect to the shifted Dirichlet form $\int_\Om|\grad f|^2-t\int_\Om|f|^2$. The difficulty is that the sum of two negative subspaces need not be negative, because of the cross terms. We encode these cross terms by a complex bilinear form on $E_k$, which is skew-symmetric by integration by parts. If $k$ is odd, a skew-symmetric form on the $k$-dimensional space $E_k$ is degenerate, and this yields $0\neq v\in E_k$ such that $Tv$ is orthogonal to the whole of $E_k$ with respect to the shifted Dirichlet form. Since $Tv\in TE_k$ is itself negative, $E_k\oplus\C\,Tv$ is a $(k+1)$-dimensional negative subspace, and the theorem follows by min--max.

This also explains the limitation of the argument. For even $k$, a skew-symmetric form may be non-degenerate, and the present construction need not produce any additional direction. The parity is not merely formal: in one dimension the analogous shifted inequality is an equality at odd indices and fails at even indices; see \cite[Section~4]{BLPS2021}. Thus any proof of the remaining even-index cases in the plane must use information beyond the odd-dimensional skew-symmetry exploited here; see Remark~\ref{rem:even} for a precise statement. The argument below does not suggest that Payne's conjecture should fail at even planar indices; it only identifies the point where this particular mechanism stops.

There is a link between Payne's conjecture and comparison inequalities
for Dirichlet and Neumann eigenvalues. In fact, they were originally proposed in the same paper \cite{Payne1955}, see \S\ref{subsec:geometry} for further discussion.
Our approach is motivated by recent developments in this topic,
see e.g. \cite{Rohleder2025} and notably a new preprint \cite{TangZhang2026}, see also our AI usage disclosure statement.

\section{Proof of the main theorem}

\subsection{The variational argument}\label{subsec:planar-proof}

We work with complex-valued Sobolev spaces, and write $\|\cdot\|_2$ for the norm of $L^2(\Om;\C)$. For $t>0$, let 
\[
q_t(f,g)
:=\int_\Om \grad f\cdot\grad\overline g
-t\int_\Om f\overline g
\]
be the shifted Dirichlet form on $H_0^1(\Om;\C)$, and let
\[
b_t(f,g)
:=\int_\Om \grad f\cdot\grad g
-t\int_\Om fg
\]
be its complex bilinear companion. Thus $q_t$ is Hermitian, $b_t$ is symmetric, and $q_t(f,g)=b_t(f,\overline g)$. As usual we write $q_t[f]:=q_t(f,f)$ for the associated quadratic form.

Choose real buckling eigenfunctions $u_1,\ldots,u_k$ such that
\begin{equation}\label{eq:normalization}
\int_\Om\grad u_i\cdot\grad u_j=\delta_{ij},
\qquad
\int_\Om\Delta u_i\,\Delta u_j=\Lambda_i\delta_{ij},
\end{equation}
and set
\[
E_k:=\operatorname{span}_{\C}\{u_1,\ldots,u_k\}.
\]
By the first relation in \eqref{eq:normalization}, the $u_i$ remain linearly independent over $\C$, so $\dim_{\C}E_k=k$; moreover $E_k$ is invariant under complex conjugation. Expanding $u=\sum_i c_iu_i$ and using \eqref{eq:normalization} gives, for every $u\in E_k$,
\begin{equation}\label{eq:buckling-bound}
\|\Delta u\|_2^2=\sum_{i=1}^k\Lambda_i|c_i|^2\leq\Lambda_k\sum_{i=1}^k|c_i|^2=\Lambda_k\|\grad u\|_2^2.
\end{equation}

The proof rests on the following elementary first-order identities.

\begin{lemma}\label{lem:first-order}
The operator $T=\partial_1+\ir\partial_2$ maps $H_0^2(\Om;\C)$ continuously into $H_0^1(\Om;\C)$. For $u,v\in H_0^2(\Om;\C)$,
\begin{align}
\|Tu\|_2^2&=\|\grad u\|_2^2,\label{eq:T1}\\
\|\grad Tu\|_2^2&=\|\Delta u\|_2^2,\label{eq:T2}
\end{align}
and
\begin{equation}\label{eq:skew}
b_t(Tu,v)=-b_t(u,Tv).
\end{equation}
\end{lemma}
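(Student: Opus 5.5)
The plan is to prove every identity first for $u,v\in C_c^\infty(\Om;\C)$, where only integration by parts with no boundary terms is needed, and then pass to $H_0^2(\Om;\C)$ by density. Since $H_0^2(\Om)$ is by definition the closure of $C_c^\infty(\Om)$ in the $H^2$ norm, this requires no regularity of $\partial\Om$. For the mapping property, observe that if $u_n\in C_c^\infty$ converge to $u$ in $H^2$, then $Tu_n\in C_c^\infty$ and $\|Tu_n-Tu\|_{H^1}\le C\|u_n-u\|_{H^2}$. Hence $Tu$ is an $H^1$-limit of compactly supported smooth functions, so it lies in $H_0^1$, and $T$ is bounded from $H_0^2$ into $H_0^1$.

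For \eqref{eq:T1}, expand $|Tu|^2=|\partial_1u|^2+|\partial_2u|^2+2\operatorname{Re}\bigl(\ir\,\partial_2u\,\overline{\partial_1u}\bigr)$. For smooth compactly supported $u$, integrating by parts twice gives $\int\partial_2u\,\overline{\partial_1u}=\int\partial_1u\,\overline{\partial_2u}$. Thus $\int\partial_2u\,\overline{\partial_1u}$ equals its own conjugate, so it is real, and the cross term $\operatorname{Re}(\ir\cdot\text{real})$ vanishes.

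For \eqref{eq:T2}, use $\overline T T=\Delta$ with $\overline T=\partial_1-\ir\partial_2$. Integrating by parts, $\|\grad Tu\|_2^2=-\int \Delta(Tu)\,\overline{Tu}$. Next write $\Delta Tu=T\Delta u$ and move $T$ across; the formal adjoint of $T$ is $-\overline T$. This gives $\int\Delta u\,\overline{\overline T T u}=\|\Delta u\|_2^2$. Alternatively, apply the same argument as for \eqref{eq:T1} to each component: $\|\grad Tu\|^2=\sum_j\|T\partial_ju\|^2=\sum_j\|\grad\partial_ju\|^2=\|D^2u\|^2=\|\Delta u\|^2$, where the last step is the standard identity $\int|D^2u|^2=\int|\Delta u|^2$ for compactly supported $u$.

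For \eqref{eq:skew}, note that $b_t$ is complex bilinear, with no conjugation. Since $T$ is a constant-coefficient first-order operator, it commutes with $\partial_j$. For compactly supported smooth functions, $\int (Tf)g=-\int f(Tg)$ because the transpose of $\partial_j$ is $-\partial_j$. Applying this with $f=\partial_ju$, $g=\partial_jv$ and with $f=u$, $g=v$ gives the claim.

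Each identity is continuous in $u,v$ with respect to the $H^2$ topology, using the continuity of $T$ established above. The identities therefore extend from $C_c^\infty$ to $H_0^2$.

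The only subtle point is \eqref{eq:T2}, where the $H^2$ level of regularity matters. It is also the reason to work on the dense class $C_c^\infty$: this avoids any trace or boundary regularity argument for a general bounded open set.
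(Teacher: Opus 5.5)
Your proof is correct. For \eqref{eq:T1} and \eqref{eq:T2} it follows a different route from the paper's; the mapping property and \eqref{eq:skew} are handled the same way (closure definition of $H_0^2$, integration by parts on $C_c^\infty$, density).

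The paper extends $u\in H_0^2(\Om)$ by zero to an element of $H^2(\R^2)$ and applies Plancherel. Identifying $\R^2\simeq\C$, one has $\widehat{Tu}(\xi)=\ir\xi\,\widehat u(\xi)$ with $|\ir\xi|=|\xi|$. This gives both identities in one line each, for all $u\in H_0^2$ at once, without a separate approximation step.

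Your argument is the physical-space counterpart of that symbol identity. The factorisation $\overline T T=\Delta$ and the formal adjoint $-\overline T$ play the role of $|\ir\xi|^2=|\xi|^2$.

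The Fourier proof is shorter. It also makes transparent the criterion $|a\cdot\xi|=|\xi|$ that the paper uses in Remark~\ref{rem:dimension} to single out $T$.

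Your integration-by-parts proof avoids Fourier analysis. It uses only that $\partial_1,\partial_2$ commute, are formally skew, and satisfy $\partial_1^2+\partial_2^2=\Delta$. These properties persist for a parallel orthonormal frame $X,Y$ on a flat surface with trivial holonomy. So your version is the one that genuinely justifies the word ``verbatim'' in the proof of Proposition~\ref{prop:flat}, where no global Fourier transform is available.
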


\begin{proof}
The mapping property follows directly from the definition of $H_0^2(\Om)$ as the closure of $C_c^\infty(\Om)$ in $H^2(\Om)$. Extend $u$ by zero onto $\R^2$. This extension belongs to $H^2(\R^2)$. 
Identifying $\R^2\simeq\C$ via $\xi=\xi_1+\ir\xi_2$, we have
$\widehat{Tu}(\xi)=\ir\xi\,\widehat u(\xi)$, and $|\ir\xi|=|\xi|$. Plancherel's
theorem then gives
\eqref{eq:T1} and \eqref{eq:T2}. Finally, for compactly supported smooth functions, each $\partial_j$ is formally skew-symmetric with respect to the complex bilinear $L^2$ pairing. Since $T$ commutes with the derivatives, integration by parts gives \eqref{eq:skew}; the general case follows by density.
\end{proof}

We next record the two negative trial spaces. If $0\neq u\in E_k$, then $\grad u\neq0$, and integration by parts, Cauchy--Schwarz and \eqref{eq:buckling-bound} give
\[
\|\grad u\|_2^2
=\left|\int_\Om(-\Delta u)\overline u\right|
\leq\|\Delta u\|_2\|u\|_2
\leq\Lambda_k^{1/2}\|\grad u\|_2\|u\|_2,
\]
so that $\|\grad u\|_2\leq\Lambda_k^{1/2}\|u\|_2$, and hence
\begin{equation}\label{eq:Enegative}
q_t[u]
=\|\grad u\|_2^2-t\|u\|_2^2
\leq(\Lambda_k-t)\|u\|_2^2<0
\end{equation}
whenever $t>\Lambda_k$. On the other hand, Lemma~\ref{lem:first-order} and \eqref{eq:buckling-bound} yield
\begin{equation}\label{eq:TEnegative}
q_t[Tu]
=\|\Delta u\|_2^2-t\|\grad u\|_2^2
\leq(\Lambda_k-t)\|\grad u\|_2^2<0.
\end{equation}
Thus both $E_k$ and $TE_k$ are $q_t$-negative.

Define a complex bilinear form on $E_k$ by
\[
\omega_t(u,v):=b_t(Tu,v).
\]
By \eqref{eq:skew} and the symmetry of $b_t$,
\[
\omega_t(u,v)
=-b_t(u,Tv)
=-b_t(Tv,u)
=-\omega_t(v,u).
\]
Hence $\omega_t$  is skew-symmetric.

\begin{proof}[Proof of Theorem~\ref{thm:main}]
Let $k$ be odd and fix $t>\Lambda_k$. 
Since $E_k$ has odd complex dimension, the skew-symmetric form $\omega_t$ is degenerate
(its Gram matrix $M$ in any basis satisfies $\det M=\det M^{\mathsf T}=\det (-M)=(-1)^k\det M$, so $\det M=0$).
Choose $0\neq v\in E_k$ in its kernel, so that
\begin{equation}\label{eq:radical}
b_t(Tv,w)=0
\qquad\text{for every }w\in E_k.
\end{equation}
Because $E_k$ is invariant under complex conjugation, \eqref{eq:radical} implies
\[
q_t(Tv,w)=b_t(Tv,\overline w)=0
\qquad\text{for every }w\in E_k,
\]
and, $q_t$ being Hermitian, also $q_t(w,Tv)=0$ for every $w\in E_k$. By \eqref{eq:TEnegative}, $q_t[Tv]<0$. In particular $Tv\notin E_k$, since otherwise the preceding orthogonality with $w=Tv$ would give $q_t[Tv]=0$.

It follows from \eqref{eq:Enegative} and \eqref{eq:TEnegative} that $q_t$ is negative definite on the $(k+1)$-dimensional space
\[
E_k\oplus\C Tv.
\]
Indeed, the two summands are $q_t$-orthogonal and $q_t$ is negative on each of them. The min--max principle \eqref{eq:dirichlet-minmax} therefore gives
\[
\lambda_{k+1}<t.
\]
Since this holds for every $t>\Lambda_k$, letting $t\downarrow\Lambda_k$ yields
\[
\lambda_{k+1}\leq\Lambda_k.
\]
Taking $k=2m-1$ proves \eqref{eq:main}.
\end{proof}

\begin{remark}\label{rem:same-index}
The first part of the proof of \eqref{eq:Enegative} alone gives $\lambda_k<\Lambda_k$ for every $k$ and every bounded open set $\Om\subset\R^2$; the argument works verbatim in any dimension. Indeed, we showed that $\|\grad u\|_2^2\leq\Lambda_k\|u\|_2^2$ for all $u\in E_k$. If equality holds for some $u\neq0$, then both inequalities in the preceding chain are equalities. In particular, equality in the Cauchy--Schwarz inequality gives $-\Delta u=c\,u$ for some $c\in\C$, and then $c=\|\grad u\|_2^2/\|u\|_2^2=\Lambda_k$. Since $u\in H_0^2(\Om)$, its extension $\tilde u$ by zero belongs to $H^2(\R^2)$ and satisfies $-\Delta\tilde u=\Lambda_k\tilde u$ on $\R^2$. Hence $(|\xi|^2-\Lambda_k)\,\widehat{\tilde u}(\xi)=0$ for almost every $\xi$, and so $\tilde u=0$, a contradiction. By compactness of the unit sphere of $E_k$,
\[
\max_{0\neq u\in E_k}\frac{\|\grad u\|_2^2}{\|u\|_2^2}<\Lambda_k,
\]
and \eqref{eq:dirichlet-minmax} yields $\lambda_k<\Lambda_k$.  This gives an elementary proof of the same-index Dirichlet--buckling comparison discussed in \cite{Kelliher2010}, without any assumption on $\partial\Om$.
\end{remark}

\begin{remark}\label{rem:even}
We explain more precisely why the argument does not extend to even $k$. Note first that Theorem~\ref{thm:main} does cover an even index $k$ whenever $\Lambda_k=\Lambda_{k+1}$, since then $\lambda_{k+1}\leq\lambda_{k+2}\leq\Lambda_{k+1}=\Lambda_k$. In general, consider the matrix of $\omega_t$ in the basis \eqref{eq:normalization},
\begin{gather*}
M(t):=\left(\omega_t(u_i,u_j)\right)_{i,j=1}^k=M_0-tM_1,
\\
M_0:=\left(\int_\Om\grad(Tu_i)\cdot\grad u_j\right)_{i,j=1}^k,
\quad
M_1:=\left(\int_\Om(Tu_i)\,u_j\right)_{i,j=1}^k.
\end{gather*}
Since $M(t)$ is skew-symmetric for every $t$, so are $M_0$ and $M_1$, and $p(t):=\det M(t)$ is a polynomial in $t$ of degree at most $k$. The proof of Theorem~\ref{thm:main} requires $p(t)=0$ along a sequence $t\downarrow\Lambda_k$. For odd $k$ this holds because $p\equiv0$. For even $k$ it holds if and only if $p\equiv0$; otherwise $\omega_t$ is non-degenerate for all but finitely many $t$ (at most $k/2$ of them, as $p$ is the square of the Pfaffian of $M(t)$), and the argument gives no information near $\Lambda_k$.

The identity $p\equiv0$ for even $k$ cannot be deduced from the ingredients of the proof. Indeed, let $\Om=(0,\pi)\subset\R$ and replace $T$ by $\frac{\dr}{\dr x}$. Then Lemma~\ref{lem:first-order} and \eqref{eq:buckling-bound}--\eqref{eq:TEnegative} hold with the same (in fact simpler) proofs, and hence so does the proof of Theorem~\ref{thm:main} for odd $k$. On the other hand, $\lambda_j=j^2$, and splitting buckling eigenfunctions into even and odd parts with respect to $x=\pi/2$ gives
\[
\Lambda_{2n-1}=4n^2,
\qquad
\Lambda_{2n}=\frac{4z_n^2}{\pi^2},
\qquad n\geq1,
\]
where $z_n\in\left(n\pi,(n+\tfrac12)\pi\right)$ is the $n$-th positive root of $\tan z=z$; cf.\ \cite[Section~4]{BLPS2021}. Thus $\lambda_{2n}=\Lambda_{2n-1}$, while $\Lambda_{2n}<(2n+1)^2=\lambda_{2n+1}$. Consequently, for $k=2n$ and $\Lambda_k<t<\lambda_{k+1}$, the form $q_t$ has no $(k+1)$-dimensional negative subspace in $H_0^1(0,\pi)$; in particular, $\omega_t$ is non-degenerate for all such $t$, and $p\nequiv0$. A proof of \eqref{eq:payne-conj} at even indices must therefore use properties of planar domains that have no counterpart on the interval. See also Remark \ref{rem:cylinder} for another example illustrating this phenomenon.
\end{remark}

\begin{remark}\label{rem:dimension}
The dimension enters the proof only through Lemma~\ref{lem:first-order}. Let $\Om\subset\R^d$ be a bounded open set, and 
consider a first-order operator $T_a:=a\cdot\grad=\sum_{j=1}^d a_j\partial_j$ with constant coefficients $a=p+\ir q\in\C^d$, $p,q\in\R^d$; the operator $T$ of Lemma~\ref{lem:first-order} is $T_a$ with $d=2$ and $a=(1,\ir)$.
By Plancherel's theorem, \eqref{eq:T1} holds for all $u\in H_0^2(\Om;\C)$ if and only if $|a\cdot\xi|^2=|\xi|^2$ for all $\xi\in\R^d$ (for the ``only if'' part, test with $u(x)=\phi(x)\cos(N\omega\cdot x)$, $\phi\in C_c^\infty(\Om)$, $\omega\in\mathbb{S}^{d-1}$, and let $N\to\infty$), that is, if and only if $pp^{\mathsf T}+qq^{\mathsf T}=I_d$; in this case \eqref{eq:T2} holds as well. Since the matrix on the left has rank at most two, this is impossible for $d\geq3$. For $d=1$ one may take $a=1$, which is the setting of Remark~\ref{rem:even}. For $d=2$ the condition means that $p$ and $q$ are orthonormal, so that $T_a=\mathrm{e}^{\ir\phi}(\partial_1\pm\ir\partial_2)$ for some $\phi\in\R$; thus $T$ is essentially the only admissible choice in the plane.

The case $k=1$ of \eqref{eq:payne-conj} nevertheless holds in every dimension, with $\lambda_k$ and $\Lambda_k$ defined by \eqref{eq:dirichlet-minmax} and \eqref{eq:buckling-minmax}, cf. \cite{DLS26}. Indeed, the proofs of \eqref{eq:buckling-bound} and \eqref{eq:Enegative} do not use $d=2$. Let $u_1$ be a real first buckling eigenfunction. By Plancherel's theorem,
\[
\sum_{j=1}^d\left(\|\grad\partial_ju_1\|_2^2-\Lambda_1\|\partial_ju_1\|_2^2\right)
=\|\Delta u_1\|_2^2-\Lambda_1\|\grad u_1\|_2^2=0,
\]
so $\|\grad\partial_ju_1\|_2^2\leq\Lambda_1\|\partial_ju_1\|_2^2$ for some $j$; also $\partial_ju_1\neq0$  all $j$ since $u_1$ has compact support in $\R^d$ after extension by zero. Since $u_1$ is real,
\[
\int_\Om u_1\,\partial_ju_1=\frac12\int_\Om\partial_j\bigl(u_1^2\bigr)=0,
\qquad
\int_\Om\grad u_1\cdot\grad\partial_ju_1=\frac12\int_\Om\partial_j|\grad u_1|^2=0.
\]
Hence, for $t>\Lambda_1$, the functions $u_1$ and $\partial_ju_1$ are $q_t$-orthogonal and $q_t$-negative, so they span a two-dimensional $q_t$-negative subspace, and \eqref{eq:dirichlet-minmax} gives $\lambda_2<t$, in which we can take the limit $t\downarrow \Lambda_1$. 

For $d\geq3$ and odd $k\geq3$ we know of no substitute for $T$.
\end{remark}

\subsection{The $\bar\partial$ viewpoint and flat surfaces}\label{subsec:geometry}

In the complex coordinate $z=x+\ir y$,
\[
T=\partial_x+\ir\partial_y=2\partial_{\bar z}.
\]
Thus $T$ is the scalar form of the $\bar\partial$-operator. The global parallel orthonormal frame of the Euclidean plane provides a parallel trivialisation of the bundle of $(0,1)$-forms, identifying such forms with complex-valued functions. This gives a geometric explanation for the special role of dimension two, and an immediate extension to flat surfaces with trivial holonomy.

\begin{prop}\label{prop:flat}
Let $(M,g)$ be a compact connected oriented flat Riemannian surface with nonempty smooth boundary and trivial Levi-Civita holonomy. Let $\lambda_k(M,g)$ and $\Lambda_k(M,g)$ denote the corresponding Dirichlet and buckling eigenvalues. Then
\begin{equation}\label{eq:flat}
\lambda_{2m}(M,g)\leq\Lambda_{2m-1}(M,g),
\qquad m\geq1.
\end{equation}
\end{prop}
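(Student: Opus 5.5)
The plan is to rerun the argument of \S\ref{subsec:planar-proof} verbatim, after replacing $T$ by an intrinsic first-order operator built from a global parallel frame. Because $(M,g)$ is flat with trivial holonomy, there is a global orthonormal frame $(e_1,e_2)$ with $\nabla e_1=\nabla e_2=0$; we choose it positively oriented. Such a frame consists of commuting Killing fields, since $[e_1,e_2]=\nabla_{e_1}e_2-\nabla_{e_2}e_1=0$, and both are divergence-free. We then set $T:=e_1+\ir e_2$, acting on functions as a derivation; this is $2\partial_{\bar z}$ in local flat coordinates adapted to the frame. The buckling and Dirichlet problems on $M$ are defined by \eqref{eq:dirichlet-minmax} and \eqref{eq:buckling-minmax}, with $H_0^1(M)$ and $H_0^2(M)$ the closures of $C_c^\infty(M^\circ)$, and with $\Delta$ and $\grad$ the Riemannian ones. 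The normalization \eqref{eq:normalization}, the space $E_k$, and the bound \eqref{eq:buckling-bound} carry over unchanged.

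The key step is the analogue of Lemma~\ref{lem:first-order}. By density, we only need it for $u,v\in C_c^\infty(M^\circ)$. Since $e_1,e_2$ are parallel, $|\grad f|^2=|e_1f|^2+|e_2f|^2$ and $\Delta f=e_1e_1f+e_2e_2f$. Since the $e_j$ are divergence-free, they are skew-adjoint for the complex bilinear pairing $\int_M fg\,\dr A$, with no boundary terms thanks to the compact support. These facts give \eqref{eq:skew} directly, because $T$ commutes with $e_1$ and $e_2$. For \eqref{eq:T1}, expand
\[
\|Tu\|_2^2=\|e_1u\|_2^2+\|e_2u\|_2^2+2\,\mathrm{Re}\Bigl(\ir\int_M e_2u\,\overline{e_1u}\Bigr),
\]
and kill the cross term: integrating by parts twice and using $[e_1,e_2]=0$ shows that $\int_M e_2u\,\overline{e_1u}$ is real. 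Identity \eqref{eq:T2} follows by the same computation applied to $e_ju$, summed over $j$, once we note that $\sum_{j,l}\|e_le_ju\|_2^2=\|\Delta u\|_2^2$. This last equality is the Bochner identity $\int|\nabla^2u|^2=\int(\Delta u)^2$ for compactly supported $u$ on a flat manifold; here it reduces to the same double integration by parts using commutativity. The continuity of $T\colon H_0^2\to H_0^1$ is clear from its action on $C_c^\infty$.

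With these identities in hand, \eqref{eq:Enegative}, \eqref{eq:TEnegative}, the skew form $\omega_t$, and the proof of Theorem~\ref{thm:main} go through word for word. This yields a $(k+1)$-dimensional $q_t$-negative subspace for odd $k$, and hence \eqref{eq:flat}.

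The main obstacle I expect is replacing Plancherel, which was used in Lemma~\ref{lem:first-order}. On $M$ there is no Fourier transform and no extension to $\R^2$, so everything must come from integration by parts with commuting, parallel, divergence-free fields on compactly supported functions. The two points needing care are that the cross term in $\|Tu\|_2^2$ vanishes, and that the Hessian identity holds. This is exactly where flatness ($\nabla e_j=0$, so there is no curvature term in Bochner) and trivial holonomy (so that $T$ is globally defined) enter. Connectedness and orientation are only used to fix a global frame.
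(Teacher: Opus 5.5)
Your proposal is correct and follows the paper's proof: take a global parallel oriented orthonormal frame, set $T=e_1+\mathrm{i}e_2$, check the identities of Lemma~\ref{lem:first-order} using parallelism, and then run the proof of Theorem~\ref{thm:main} unchanged. The paper asserts those identities in one line ("since the frame is parallel"). Your integration-by-parts verification with commuting, divergence-free fields replaces Plancherel and fills in exactly that step; it is correct, including the vanishing cross term and $\sum_{j,l}\|e_le_ju\|_2^2=\|\Delta u\|_2^2$.
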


\begin{proof}
Choose a global parallel oriented orthonormal frame $X,Y$ and set $T=X+\ir Y$. Since the frame is parallel, the identities of Lemma~\ref{lem:first-order} hold verbatim with the Riemannian gradient and Laplacian:
\[
\|Tu\|_2^2=\|\grad_g u\|_2^2,
\qquad
\|\grad_g Tu\|_2^2=\|\Delta_g u\|_2^2,
\qquad
b_{t,g}(Tu,v)=-b_{t,g}(u,Tv).
\]
The proof of Theorem~\ref{thm:main} therefore carries over without change.
\end{proof}

If $(M,g)$ is globally isometric to a planar domain, this is of course just Theorem~\ref{thm:main}. 
The proposition applies, for example, to smoothly bounded subdomains of flat tori, and to flat cylinders, for which it is sharp, see Remark~\ref{rem:cylinder}.
For a flat surface with nontrivial holonomy, and more generally for a curved surface, $\bar\partial u$ is still intrinsically defined but there is no global parallel trivialisation which turns it into a scalar trial function. Thus the $\bar\partial$ interpretation suggests a geometric extension of the method, but does not by itself give the scalar comparison on an arbitrary surface.

\begin{remark}\label{rem:cylinder}
The main idea of the following example is due to  \cite{BL26}. Let $\Om_L:=\mathbb{S}^1\times(0,L)$, with $\mathbb{S}^1=\R/2\pi\mathbb{Z}$, be the flat cylinder of circumference $2\pi$ and height $L$, the boundary conditions being imposed on the two boundary circles. Separating variables, $u=f(y)\mathrm{e}^{\ir nx}$ with $n\in\mathbb{Z}$, the Dirichlet eigenvalues of $\Om_L$ are $n^2+\tfrac{m^2\pi^2}{L^2}$, $m\geq1$, whereas  
the buckling quotient equals 
\[
n^2+\frac{\|f''\|_2^2+n^2\|f'\|_2^2}{\|f'\|_2^2+n^2\|f\|_2^2},
\] 
which by $\|f''\|_2\|f\|_2\geq\|f'\|_2^2$
 is at least $n^2+\tfrac{\|f'\|_2^2}{\|f\|_2^2}>n^2+\tfrac{\pi^2}{L^2}$. The case $n=0$ corresponds to the interval treated in Remark~\ref{rem:even}. Hence, if $L>2\pi\sqrt{J(J+1)}$ for some $J\geq1$ (and therefore $L>2\sqrt{2}\pi$),  then the first $2J+1$ Dirichlet and the first $2J$ buckling eigenvalues of $\Om_L$ all come from $n=0$, and therefore
\[
\lambda_{2m}(\Om_L)=\Lambda_{2m-1}(\Om_L)=\frac{4m^2\pi^2}{L^2},
\qquad 1\leq m\leq J,
\]
\[
\lambda_{2J+1}(\Om_L)-\Lambda_{2J}(\Om_L)=\frac{(2J+1)^2\pi^2-4z_J^2}{L^2}>0,
\]
with $z_J$ defined in Remark~\ref{rem:even}.
Thus \eqref{eq:flat} is an equality at every odd index less than or equal to $2J-1$, while its even-index analogue fails for every even index less than or equal to $2J$. The point is that the whole mechanism of \S\ref{subsec:planar-proof} is available on $\Om_L$: the operator $T$ is globally defined and Lemma~\ref{lem:first-order} holds verbatim. This sharpens the conclusion of Remark~\ref{rem:even}: a proof of \eqref{eq:payne-conj} at even indices must use a property distinguishing planar domains from flat cylinders.
\end{remark}

As was mentioned in the introduction, there is a link between Payne's conjecture and comparison inequalities for Dirichlet and Neumann eigenvalues, see \cite{Payne1955, Fri91, Fil04}
as well as \cite[\S 3.2.4]{LMP} for earlier results and background material.
Our  $\bar\partial$ viewpoint is closely related to the differential-form interpretation of Rohleder's approach to Neumann--Dirichlet inequalities \cite{Rohleder2025} which was developed in   \cite{FriesGoffengMiranda2026, Muravyev2026, HuaMunchZhang2026}. 

\begin{remark}[Friedrichs and Kre\u{\i}n extensions]\label{rem:krein}
From the viewpoint of extension theory, the Dirichlet Laplacian is the Friedrichs extension of the minimal Laplacian, whereas the positive eigenvalues of its Kre\u{\i}n--von Neumann extension are precisely the buckling eigenvalues; see \cite{AshbaughEtAl2010}. Thus Payne's conjecture may also be viewed as a comparison between the Friedrichs and Kre\u{\i}n extensions, analogous in spirit to Dirichlet--Neumann spectral comparisons. 
\end{remark}

\subsection*{Acknowledgements}

ML has been partially supported by an INI Simons Fellowship. IP has been partially supported by NSERC and FRQNT. Both authors
have been partially supported by the AI for Math Fund, an initiative by Renaissance Philanthropy. 

The authors would like to thank Davide Buoso for useful discussions and for letting them know about the upcoming preprint \cite{BL26} which motivated Remark \ref{rem:cylinder}.


\subsection*{Data availability statement}

There are no additional data.

\subsection*{AI usage disclosure}
The idea of the proof of Theorem \ref{thm:main} came up in an exchange with  ChatGPT Pro 5.6 in response to a query on whether the argument in \cite{TangZhang2026} can be adapted to Payne's conjecture. The authors also used a workflow combining ChatGPT Pro 5.6 and Claude Opus 5 for 
proof verification, literature searches, and drafting and editing parts of the text.

All statements and proofs were checked by the authors, who take full responsibility for the content.

\end{document}